\documentclass{amsart}
 
\usepackage{amsmath, amsfonts, mathrsfs}
\usepackage{amssymb,latexsym, fontenc}
\usepackage{enumerate}
\usepackage{bbm} 
\usepackage[dvipsnames]{xcolor}
\usepackage[T1]{fontenc}
\usepackage{libertine}
\usepackage{hyperref} 

\usepackage[top=3.1cm, bottom=3.1cm, left=3.1cm, right=3.1cm, includefoot, heightrounded]{geometry}

\newtheorem{theorem}{Theorem}[section]

\newtheorem{lemma}[theorem]{Lemma}

\theoremstyle{definition}
\newtheorem{definition}[theorem]{Definition}

\numberwithin{equation}{section}

\begin{document}

\title[A Fuzzy Radon-Nikod\'{y}m theorem]{A Radon-Nikod\'{y}m theorem 
for fuzzy number multimeasures \\ in locally convex spaces}

\author{ Sokol Kaliaj}

\address{University of Elbasan,
Science Natural Faculty,
Mathematics Department,
Elbasan, Albania}

\email{printimesokol@gmail.com}

\thanks{}

\subjclass[2010]{Primary 28B20, 26E50, 46A03; Secondary 28E10, 46S40}

\keywords{Radon-Nikod\'{y}m theorem, Multimeasure, Fuzzy number multimeasure, Locally convex space}

\begin{abstract}
In this paper,  we present a Radon-Nikodym theorem for dominated fuzzy number multimeasures 
defined on $\Sigma$ and taking values in  $\mathcal{F}_{cwk}(X)$, 
where $(\Omega, \Sigma, \mu)$ is a complete finite measure space  
and $\mathcal{F}_{cwk}(X)$ is the family of generalized fuzzy numbers on a Hausdorff locally convex topological vector space $X$, 
i.e., 
a map $u: X \to [0,1]$ is a member of $\mathcal{F}_{cwk}(X)$, if and only if  
for each $\alpha \in (0,1]$ the $\alpha$-cut $[u]^{\alpha}$ of $u$ is a member of the family $cwk(X)$ of all nonempty convex and weakly compact subsets of $X$, ($[u]^{\alpha} = \{x \in X: u(x) \geq \alpha \}$).
\end{abstract}

\maketitle

\section{Introduction}

One of the most interesting problems arising when we deal with fuzzy number multimeasures
is the representation of a fuzzy number multimeasure $M$ as Pettis integral of a fuzzy map $\Gamma$, 
which is said to be 
a fuzzy Radon-Nikodym derivative of $M$. 
The Radon-Nikod\'{y}m theorems for fuzzy number multimeasures are intimate related to  
Radon-Nikod\'{y}m theorems for multimeasures. 
The first Radon-Nikod\'{y}m theorems for multimeasures  
in non-separable locally convex topological vector spaces 
has been obtained by  
B.~Cascales, V.~Kadets and J.~Rodriguez 
\cite[Theorem 3.1\text{ and }Theorem 4.1]{CASC_RNT}. 
The most general forms of the Radon-Nikod\'{y}m theorem in non-separable Banach spaces 
for multimeasures in terms of Pettis integrable multifunctions has been proved by 
K.~Musial \cite{MUS_Conjugate_Banach_Spaces, MUS_RNT_Rich_Multimeasures_I, MUS_RNT_Rich_Multimeasures_II}. 
Some other results can be found in  the papers 
\cite{DIP_RNT, CAND_RNT_GOULD, DIP_Bartle, KAL_ZAA_2024, KAL_JMAAI, KAL_INDIA}.

The first results concerning  
the Radon-Nikod\'{y}m theorems for fuzzy number  multimeasures  
in non-separ-able Banach spaces 
have been proved in \cite[Theorem 3.9\text{ and }Theorem 3.10]{KAL_Fuzzy_SystemI}, 
where  \cite[Theorem 3.10]{KAL_Fuzzy_SystemI} is a Radon-Nikodym theorem for dominated fuzzy number multimeasures in Banach spaces. 
The most general form of the Radon-Nikod\'{y}m theorem 
for fuzzy number multimeasures has been proved in 
\cite[Theorem 5.4]{KAL_Fuzzy_SystemII}. 
A Radon-Nikodym theorem for fuzzy number multimeasure in a non-separable Frechet space has been proved in  
\cite[Theorem 4.6]{KAL_FNM_Frechet}. 
Some other results 
devoted to this topic
can be found in  the papers 
\cite{BAN, STOJ_RNT, WU_RNT, ZHOU, XIA_Measure}.

In this paper, we first prove that if 
and $M: \Sigma \to cwk(X)$ is a dominated multimeasure, i.e., there exists $Q \in cwk(X)$ such that 
\begin{equation*}
\begin{split}
M(A) \subset \mu(A). Q
~\text{ for every }A \in \Sigma,
\end{split}
\end{equation*}
then there exists a multifunction $\Gamma : \Omega \to cwk(X)$ Pettis integrable with 
$M(A) = (P)\int_{A} \Gamma(t) d\mu$ for every $A \in \Sigma$, see Lemma \ref{L_Radon_Nikodym_Dominant}. 
Applying Lemma \ref{L_Radon_Nikodym_Dominant} we will prove that if  
$M : \Sigma \to \mathcal{F}_{cwk}(X)$ is a dominated fuzzy number multimeasure, 
then there exists 
a fuzzy map $\Gamma : \Omega \to \mathcal{F}_{cwk}(X)$ Pettis integrable 
with $M(A) = (FP)\int_{A} \Gamma(t) d \mu$ for all $A \in \Sigma$, 
see Theorem \ref{T_Fuzzy_RNT_Dominant}.

\section{The basic concepts and facts}

Throughout this paper, 
$X$ is a non-separable complete Hausdorff locally convex topological vector space with topological dual $X^{*}$, 
$(\Omega, \Sigma, \mu)$ is a complete finite measure space. 
We denote by $2^{X}$ the family of all non-empty subsets of $X$ 
and by 
$bcc(X)$ ($cwk(X)$) is denoted the subfamily of $2^{X}$ of
all bounded convex closed (convex weakly compact)  subsets of $X$. 
Given $C \in 2^{X}$ we define the \textit{support} of $C$ by 
\begin{equation*}
\begin{split}
\sigma(x^{*}, C) = \sup \{ \langle x^{*}, x \rangle : x \in C \}
~\text{ for each }x^{*} \in X^{*}.
\end{split}
\end{equation*}

\begin{definition}\label{D_Multimeasures}
A map $M: \Sigma \to cwk(X)$ is called a \textit{multimeasure},  
if the map $\sigma(x^{*}, M(\cdot))$ is a countable additive real valued measure for every 
$x^{*} \in X^{*}$. 
\end{definition}

\begin{definition}\label{D_Pettis_Multi}
A multifunction $\Gamma: \Omega \to bcc(X)$ is said to be
\textit{Pettis integrable}, 
if 
\begin{itemize}
\item
$\sigma(x^{*}, \Gamma(\cdot))$ is Lebesgue integrable for every $x^{*} \in X^{*}$,
\item
for each $A \in \Sigma$ there exists $M(A) \in cwk(X)$ such that
\begin{equation*}
\begin{split}
\sigma
\left ( 
x^{*}, M(A) 
\right )
=
\int_{A} \sigma(x^{*}, \Gamma(t)) d \mu
\quad
\text{for every }x^{*} \in X^{*}.
\end{split}
\end{equation*} 
\end{itemize}
We call $M(A)$ the \textit{Pettis integral} of $\Gamma$ over $A$ and 
set $(P)\int_{A} \Gamma(t) d \mu = M(A)$. 
It is clear that the map 
$M : \Sigma \to cwk(X))$ defined by $M(A) = (P)\int_{A} \Gamma(t) d \mu$ for all $A \in \Sigma$, 
is a multimeasure. 
\end{definition}
A presentation of the general theory of Pettis integration of vector functions can be
found in \cite{TAL}, \cite{MUS_Pettis_Integral_Functions} and \cite{MUS_Elsevier_Functions}.
The  theory of Pettis integrable multifunctions goes back to the monograph by Castaing and Valadier \cite{CAST}.
For the general theory of Pettis integrable multifunctions in non-separable Banach spaces 
we use the papers 
\cite{EL}, \cite{CASC_SelectorsA, CASC_SelectorsB} and 
\cite{MUS_Pettis_MultiF, MUS_Approximation}.

Let $\rho :\Sigma \to \Sigma$ be a lifting on 
$(\Omega, \Sigma, \mu)$, 
see \cite[Theorem 3, p.46]{ION}.  
Note that $\rho(\Sigma)$ is a sub-algebra of $\Sigma$ such that  
$\mu(A) >0$ whenever $A \in  \rho(\Sigma) \setminus \{ \emptyset\}$. 
For any $t \in \Omega$, we write 
$\Sigma(t) = \{A \in \rho(\Sigma) : t \in A\}$, and for each $A \in \Sigma(t)$, 
\begin{equation*}
\begin{split}
\Sigma(t,A) = \{B \in \Sigma(t) : B \subset A\} 
\end{split}
\end{equation*} 
and define the partial ordering $\preceq_{t}$ on $\Sigma(t)$ by 
saying that $A \preceq_{t} B$ if and only if $A \supset B$. 
Since 
\begin{equation*}
\begin{split}
A, B \in \Sigma(t) \Rightarrow A \cap B  \in \Sigma(t), 
\end{split}
\end{equation*}
it follows that $\Sigma(t)$ is a directed set with respect 
to $\preceq_{t}$ (cf. \cite[p.65]{KELL}).


\begin{definition}\label{D_Lvarphi}
Let $\nu: \Sigma \to X$ be a mapping and 
let $t \in \Omega$. 
For each $A \in \Sigma(t)$, we write 
$\Delta \nu(t,A) = \frac{\nu(A)}{\mu(A)}$,  
and then, 
\begin{equation*} 
\begin{split}
L_{\nu}(t) 
=
\bigcap_{A \in \Sigma(t)} 
\overline{L_{\nu}(t, A)}^{\sigma(X,X^{*})},
\end{split}
\end{equation*}
where 
$L_{\nu}(t, A) 
= 
\{\Delta \nu(t,B) : B \in \Sigma(t,A)\}
$ 
and 
$\overline{L_{\nu}(t, A)}^{\sigma(X,X^{*})}$ 
is the closure of 
$L_{\nu}(t, A)$ with respect to the weak topology in $X$. 
By \cite[Theorem 7, p.72]{KELL} follows that $L_{\nu}(t)$ is the set 
of all weak limit points of the net 
$(\Delta \nu(t,A))_{A \in \Sigma(t)}$. 
We say that $L_{\nu}(t)$ is the \textit{density} of $\nu$ at the point $t \in \Omega$. 
\end{definition}

The following lemma follows immediately from \cite[Lemma 3.7]{CASC_RNT}.

\begin{lemma}\label{L_Kupka} 
If a map $f: \Omega \to \mathbb{R}$ 
is Lebesgue integrable 
with $\nu(A) = \int_{A} f(t) d \mu$ for all $A \in \Sigma$, 
then there exists $Z \in \Sigma$ with $\mu(Z)=0$ such that 
the net $(\Delta \nu (t,A))_{A \in \Sigma(t)}$ 
converges to $f(t)$ for every 
$t \in \Omega \setminus Z$.
\end{lemma}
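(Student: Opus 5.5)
Lemma \ref{L_Kupka} is the scalar differentiation theorem with respect to the lifting $\rho$: for an integrable $f$, the averages $\nu(A)/\mu(A)$ over sets $A\in\Sigma(t)$ shrinking along the directed set converge to $f(t)$ almost everywhere. Since the paper says it follows from \cite[Lemma 3.7]{CASC_RNT}, the plan is to check that the setting there matches ours, and otherwise to reprove it by the standard lifting argument.

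First I reduce to bounded $f$. Write $f=f^{+}-f^{-}$; by linearity of $\nu$ and of the net, it suffices to treat $f\ge 0$. Choose simple functions, or truncations $f_n=\min(f,n)$, with $\|f-f_n\|_{1}$ small. The truncation alone does not control the error term $\int_A |f-f_n|\,d\mu/\mu(A)$ pointwise. For that I will use a maximal-type argument adapted to the lifting, and I expect this to be the main obstacle.

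The cleanest route avoids maximal inequalities by exploiting lifting properties directly. For $f$ bounded and measurable, take the lifting $\rho$ and the associated linear lifting on $L^\infty$. For rational $r$, set $E_r=\{f>r\}$. For $t\in\rho(E_r)$, the set $\rho(E_r)$ belongs to $\Sigma(t)$. For every $B\in\Sigma(t)$ with $B\subset\rho(E_r)$, we have $\mu(B\setminus E_r)=0$, since $\rho(E_r)$ and $E_r$ differ by a null set. Hence $\nu(B)/\mu(B)\ge r$, and so $\liminf_{A}\Delta\nu(t,A)\ge r$. Symmetrically, on $\rho(\{f<s\})$ we get $\limsup\le s$.

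Let $Z$ be the union over rationals $r$ of the null sets $E_r\triangle\rho(E_r)$, together with the corresponding sets for $\{f<s\}$. This is a countable union of null sets, so $\mu(Z)=0$. For $t\notin Z$, choose rationals $r<f(t)<s$ with $s-r$ arbitrarily small; then $t\in\rho(E_r)\cap\rho(\{f<s\})$, which gives convergence to $f(t)$. This argument in fact works for any measurable finite $f$ and needs no boundedness.

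For unbounded integrable $f$, the same argument applies as is: the sets $\{f>r\}$ are measurable and $\nu(B)\ge r\mu(B)$ on $B\subset\rho(E_r)$ up to a null set. So the reduction step is unnecessary, and the only delicate point is checking that $\rho(E_r)\in\rho(\Sigma)$ contains $t$ and that subsets $B\in\Sigma(t,\rho(E_r))$ are eventually in the net. This holds because the net is directed by reverse inclusion and $B\cap\rho(E_r)\in\Sigma(t)$.
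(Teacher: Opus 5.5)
Your argument is correct, and it takes a different route from the paper only in the sense that the paper gives no argument at all: it imports the statement from \cite[Lemma 3.7]{CASC_RNT}. A reader of the paper therefore has to check that the lifting-based directed set $\Sigma(t)$ used there is the same as the one here.

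Your proof is self-contained and uses only the elementary lifting properties:
\begin{itemize}
\item $\mu(E\triangle\rho(E))=0$;
\item $\rho(\Sigma)$ is closed under finite intersections, so $\rho(E_r)\cap\rho(\{f<s\})$ is a single index in $\Sigma(t)$, and every $B$ beyond it, i.e.\ every $B\in\Sigma(t,\rho(E_r)\cap\rho(\{f<s\}))$, satisfies $r\le\Delta\nu(t,B)\le s$;
\item every nonempty member of $\rho(\Sigma)$ has positive measure, so all the quotients $\Delta\nu(t,B)$ are defined.
\end{itemize}

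The key observation is that, for $t$ outside the countable null set $Z$, $f$ lies in $(r,s)$ a.e.\ on some member of $\Sigma(t)$; that is, $f$ is approximately continuous at $t$ with respect to the lifting. This makes truncation and maximal inequalities unnecessary, and integrability enters only to make $\nu(B)$ finite. Your route gives a transparent, elementary proof and a slightly stronger pointwise conclusion, namely this approximate continuity at every $t\notin Z$. The citation gives brevity.

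You should delete the opening discussion of reducing to bounded $f$, the maximal-type step you expected to be the main obstacle, and the linear lifting on $L^{\infty}$. None of these is used in the argument you actually give.

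Also phrase the final point precisely. The tail of the net beyond the index $\rho(E_r)$ is exactly $\Sigma(t,\rho(E_r))$, by the definition of $\preceq_t$. That is all that is needed; the phrase ``subsets are eventually in the net'' does not say this.
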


\begin{definition}\label{D_LMultifunction}
Let $M: \Sigma \to 2^{X}$ be a mapping 
and let $t \in \Omega$. 
For each $A \in \Sigma(t)$ we write 
$\Delta M(t,A) = \frac{M(A)}{\mu(A)}$,  
and then, 
\begin{equation*}
\begin{split}
L_{M}(t) =\bigcap_{A \in \Sigma(t)} \overline{L_{M}(t, A)}, 
\end{split}
\end{equation*} 
where 
\begin{equation*}
\begin{split}
L_{M}(t, A) = 
\text{co}
\left ( 
A_{M}(t,A)
\right ),
\quad
A_{M}(t,A)= \bigcup_{B \in \Sigma(t,A)} \Delta M(t,B),
\end{split}
\end{equation*}
and $\overline{L_{M}(t, A)}$ 
is the closure of the convex set $L_{M}(t, A)$ in $X$.  
We say that $L_{M}(t)$ is the \textit{density} of $M$ at the point $t \in \Omega$. 
\end{definition}



\begin{definition}\label{D_Fuzzy_Number}
A mapping $u: X \to [0,1]$ is said to be a \textit{fuzzy set} on $X$ (see Zadeh \cite{ZAD}).
For each $\alpha \in (0,1]$, denote by $[u]^{\alpha} = \{x \in X: u(x) \geq \alpha\}$ 
the $\alpha$-level set of $u$. 
We say that the fuzzy set $u$ on $X$ is a \textit{generalized fuzzy number} on $X$, 
if $[u]^{\alpha} \in bcc(X)$ for all $\alpha \in (0,1]$.  
We denote by $\mathcal{F}_{bcc}(X)$ the set of all generalized fuzzy numbers 
on $X$. 
In particular, 
by $\mathcal{F}_{cwk}(X)$ is denoted the set of all generalized fuzzy numbers 
$u \in \mathcal{F}_{bcc}(X)$ such that 
$[u]^{\alpha} \in cwk(X)$  for all $\alpha \in (0,1]$. 
We write $\widetilde{\theta} = \mathbbm{1}_{\{\theta\}}$, 
where  $\mathbbm{1}_{\{\theta\}}$ is the characteristic function of the set $\{\theta\}$ 
and $\theta$ is the zero vector in $X$. 
\end{definition}

It is easy to see that \cite[Lemma 3.1]{KAL_Fuzzy_SystemI} 
holds also for the complete Hausdorff locally convex space $X$. 

\begin{lemma}\label{L_REP_Fuzzy_Numbers}
If $u \in \mathcal{F}_{bcc}(X)$ $(u \in \mathcal{F}_{cwk}(X))$, then the following are true:
\begin{itemize}
\item[(i)]
$[u]^{\alpha} \in bcc(X)$ $([u]^{\alpha} \in cwk(X))$ for all $\alpha \in (0,1]$, 
\item[(ii)]
$[u]^{\beta} \subset [u]^{\alpha}$ whenever $0 < \alpha \leq \beta \leq 1$, 
\item[(iii)]
if $(\alpha_{k})$ is a nondecreasing sequence with terms in $(0,1]$ converging to $\alpha \in (0,1]$, then
\begin{equation*}
\begin{split}
[u]^{\alpha} = \bigcap_{k=1}^{+\infty} [u]^{\alpha_{k}}.
\end{split}
\end{equation*}
\end{itemize}
Conversely, 
if $\{v_{\alpha}: \alpha \in (0, 1]\}$ 
is a family satisfying $(i)$,$(ii)$ and $(iii)$,
then there is the unique generalized fuzzy number $v \in \mathcal{F}_{bcc}(X)$ $(v \in \mathcal{F}_{cwk}(X))$ such that 
$[v]^{\alpha} = v_{\alpha}$ for every $\alpha \in (0, 1]$.
\end{lemma}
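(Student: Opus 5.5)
The direct part is verified from the definition $[u]^{\alpha}=\{x\in X: u(x)\ge\alpha\}$. The converse is obtained by defining $v$ explicitly as $v(x)=\sup\{\alpha: x\in v_{\alpha}\}$ and checking that its level sets are exactly the $v_{\alpha}$.

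\emph{Direct part.} Item (i) is the definition of $\mathcal{F}_{bcc}(X)$ (respectively $\mathcal{F}_{cwk}(X)$). Item (ii) holds because $u(x)\ge\beta\ge\alpha$ implies $u(x)\ge\alpha$. For (iii), let $\alpha_{k}\uparrow\alpha$ with $\alpha_{k}\le\alpha$. By (ii), $[u]^{\alpha}\subset[u]^{\alpha_{k}}$ for every $k$. Conversely, if $u(x)\ge\alpha_{k}$ for all $k$, then $u(x)\ge\sup_{k}\alpha_{k}=\alpha$. Hence $[u]^{\alpha}=\bigcap_{k}[u]^{\alpha_{k}}$. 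No topology on $X$ is used here, so nothing changes when the Banach space is replaced by a complete Hausdorff locally convex space.

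\emph{Converse: definition of $v$.} Given a family $\{v_{\alpha}\}$ satisfying (i)--(iii), define $v:X\to[0,1]$ by
\begin{equation*}
v(x)=\sup\{\alpha\in(0,1]: x\in v_{\alpha}\}, \qquad \sup\emptyset=0 .
\end{equation*}
We must show $[v]^{\alpha}=v_{\alpha}$ for every $\alpha\in(0,1]$.

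\emph{The inclusion $v_{\alpha}\subset[v]^{\alpha}$.} If $x\in v_{\alpha}$, then $v(x)\ge\alpha$ directly from the definition.

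\emph{The inclusion $[v]^{\alpha}\subset v_{\alpha}$.} This is the only delicate step, and it is where (iii) is needed. Suppose $v(x)\ge\alpha$. There are two cases.
\begin{enumerate}[(a)]
\item The supremum is attained at some $\beta\ge\alpha$. Then $x\in v_{\beta}\subset v_{\alpha}$ by (ii).
\item Otherwise, choose $\beta_{k}$ with $x\in v_{\beta_{k}}$ and $\beta_{k}\to v(x)$. Set $\alpha_{k}=\min(\beta_{k},\alpha)$. This sequence can be arranged to be nondecreasing, by passing to a subsequence or by taking running maxima, since $x\in v_{\beta_{k}}$ gives $x\in v_{\gamma}$ for all $\gamma\le\beta_{k}$ by (ii). It converges to $\alpha$. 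By (ii), $x\in v_{\alpha_{k}}$ for every $k$. If some $\alpha_{k}=\alpha$ we are done. Otherwise (iii) gives $x\in\bigcap_{k}v_{\alpha_{k}}=v_{\alpha}$.
\end{enumerate}

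\emph{Conclusion.} Thus $[v]^{\alpha}=v_{\alpha}$ for every $\alpha\in(0,1]$. By (i), each level set lies in $bcc(X)$ (respectively $cwk(X)$), so $v\in\mathcal{F}_{bcc}(X)$ (respectively $v\in\mathcal{F}_{cwk}(X)$).

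\emph{Uniqueness.} A fuzzy set $w$ is determined by its level sets via $w(x)=\sup\{\alpha: x\in[w]^{\alpha}\}$, with the convention $\sup\emptyset=0$. Hence two generalized fuzzy numbers with the same $\alpha$-cuts coincide.
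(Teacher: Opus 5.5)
Your proof is correct. The paper gives no argument of its own: it only remarks that Lemma 3.1 of \cite{KAL_Fuzzy_SystemI} carries over, and your construction $v(x)=\sup\{\alpha\in(0,1]: x\in v_{\alpha}\}$ is the standard level-set representation argument behind that lemma, with (iii) needed only in the boundary case $v(x)=\alpha$. You also observe that the topology of $X$ enters only through (i), which is transported unchanged to the level sets, and that is exactly why the paper can call the extension from Banach spaces to locally convex spaces easy to see.
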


Hence, the following auxiliary lemma can be proved in much the same way as \cite[Lemma 3.2]{KAL_Fuzzy_SystemI}.

\begin{lemma}\label{L_Fuzzy_Numbers_CWK}
Let $X$ be a complete Hausdorff locally convex space 
and let $\{u_{\alpha} \in 2^{X}: \alpha \in (0, 1]\}$ be 
a family.  
Assume that
\begin{itemize}
\item[(i)]
$u_{\alpha} \in cwk(X)$ for all $\alpha \in (0,1]$, 
\item[(ii)]
$u_{\beta} \subset u_{\alpha}$ whenever $0 < \alpha \leq \beta \leq 1$.
\end{itemize}
Then there exists a unique generalized fuzzy number $v \in \mathcal{F}_{cwk}(X)$ such that 
for each $\alpha \in (0, 1]$, we have 
\begin{equation*}
\begin{split}
[v]^{\alpha} = \bigcap_{n=1}^{+\infty} u_{q_{n}^{\alpha}},
\end{split}
\end{equation*}
where $(q_{n}^{\alpha})$ determines $\alpha$ (see \cite[Definition 2.1]{KAL_Fuzzy_SystemI}). 
\end{lemma}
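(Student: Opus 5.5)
The plan is to define $v_\alpha = \bigcap_{n=1}^{+\infty} u_{q_{n}^{\alpha}}$ for each $\alpha \in (0,1]$, check that the family $\{v_\alpha : \alpha \in (0,1]\}$ satisfies conditions (i), (ii) and (iii) of Lemma \ref{L_REP_Fuzzy_Numbers}, and then invoke the converse part of that lemma to get a unique $v \in \mathcal{F}_{cwk}(X)$ with $[v]^{\alpha} = v_\alpha$. Uniqueness is then immediate, because a fuzzy set is determined by its level sets: $u(x) = \sup\{\alpha : x \in [u]^\alpha\}$. I will use the properties of the sequences $(q_{n}^{\alpha})$ from \cite[Definition 2.1]{KAL_Fuzzy_SystemI}. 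I take these to be: a nondecreasing sequence of rationals in $(0,1]$ with $q_n^\alpha < \alpha$ for $\alpha > 0$ (or $q_n^\alpha \le \alpha$), converging to $\alpha$, and chosen so that $\alpha \le \beta$ implies $q_n^\alpha \le q_n^\beta$ for all $n$. An example is $q_n^\alpha = \max\{k/2^n : k/2^n < \alpha\}$ with a suitable convention at small $n$.

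\textbf{Condition (i).} Since $(q_n^\alpha)$ is nondecreasing, hypothesis (ii) makes $(u_{q_n^\alpha})_n$ a decreasing sequence of convex weakly compact sets. Their intersection is convex and weakly closed inside the weakly compact set $u_{q_1^\alpha}$, hence weakly compact. It is nonempty by the finite intersection property in the weak topology. Thus $v_\alpha \in cwk(X)$.

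\textbf{Condition (ii).} Let $\alpha \le \beta$. The monotonicity $q_n^\alpha \le q_n^\beta$ gives $u_{q_n^\beta} \subset u_{q_n^\alpha}$, so $v_\beta \subset v_\alpha$. If Definition 2.1 does not guarantee this termwise comparison, I would instead argue as follows. For $\alpha < \beta$, each $q_n^\alpha$ is eventually dominated by some $q_m^\beta$, since $q_m^\beta \to \beta > \alpha \ge q_n^\alpha$. Hence every $u_{q_n^\alpha}$ contains some $u_{q_m^\beta}$, and the inclusion again follows.

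\textbf{Condition (iii).} This is the main obstacle. Let $\alpha_k \uparrow \alpha$; we need $v_\alpha = \bigcap_k v_{\alpha_k}$. The inclusion $\subset$ is just (ii). For $\supset$, fix $n$. Since $q_n^\alpha < \alpha$ and $\alpha_k \to \alpha$, there is $k$ with $\alpha_k > q_n^\alpha$. Then some $q_m^{\alpha_k} \ge q_n^\alpha$, so $v_{\alpha_k} \subset u_{q_m^{\alpha_k}} \subset u_{q_n^\alpha}$. Intersecting over $n$ gives $\bigcap_k v_{\alpha_k} \subset v_\alpha$. The delicate point is the strict inequality $q_n^\alpha < \alpha$, or more generally the precise form of Definition 2.1. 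If $\alpha$ itself may occur as some $q_n^\alpha$, the case $\alpha_k = \alpha$ eventually is trivial, and otherwise the same argument works because $\alpha_k < \alpha$ strictly. Once (i)–(iii) are established, Lemma \ref{L_REP_Fuzzy_Numbers} yields $v$, completing the proof as in \cite[Lemma 3.2]{KAL_Fuzzy_SystemI}.
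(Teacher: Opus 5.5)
Your proposal is correct and follows the route the paper intends. The paper gives no separate proof; via Lemma \ref{L_REP_Fuzzy_Numbers} it defers to \cite[Lemma 3.2]{KAL_Fuzzy_SystemI}: set $v_\alpha=\bigcap_n u_{q_n^\alpha}$, check (i)--(iii) of that lemma, and apply its converse. (Since $q_n^\alpha<\alpha$ and $q_n^\alpha\to\alpha$, this intersection equals $\bigcap_{\beta<\alpha}u_\beta$, whatever sequence is chosen.)

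One correction concerns your fallback remark in (iii), which is false. If some $q_n^\alpha$ equalled $\alpha$, you would need some $q_m^{\alpha_k}\ge\alpha$ with $\alpha_k<\alpha$, which is impossible. Worse, the lemma itself would then fail. In $X=\mathbb{R}$, take $u_\beta=[-1,1]$ for $\beta<\tfrac12$ and $u_\beta=\{0\}$ for $\beta\ge\tfrac12$. If $q_n^{1/2}=\tfrac12$ for some $n$, then $v_{1/2}=\{0\}$, while $v_{\alpha_k}=[-1,1]$ for every $\alpha_k<\tfrac12$, so left-continuity (iii) fails. The strict inequality $q_n^\alpha<\alpha$ is therefore an indispensable part of what ``$(q_n^\alpha)$ determines $\alpha$'' must mean, not a convenience.
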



\begin{definition}\label{D_Fuzzy_Number_Measure} 
A fuzzy map 
$M : \Sigma \to \mathcal{F}_{cwk}(X)$ 
is called a \textit{fuzzy number multimeasure}, 
if for each $\alpha \in (0,1]$ the map 
\begin{equation*}
\begin{split}
M^{\alpha} : \Sigma \to cwk(X), 
\quad
M^{\alpha}(A) = [M(A)]^{\alpha} = \{x \in X : M(A)(x) \geq \alpha\}
\end{split}
\end{equation*}
is a multimeasure. 
\end{definition}


\begin{definition}\label{D_Pettis_Fuzzy}
Given a fuzzy map  $\Gamma : \Omega \to \mathcal{F}_{bcc}(X)$ 
we can define
\begin{equation*}
\begin{split}
\Gamma^{\alpha} : \Omega \to cwk(X), 
\quad
\Gamma^{\alpha}(t) = [\Gamma(t)]^{\alpha} = \{x \in X : \Gamma(t)(x) \geq \alpha\}
\quad
(\alpha \in (0,1]). 
\end{split}
\end{equation*}
We say that the fuzzy map 
$\Gamma : \Omega \to \mathcal{F}_{bcc}(X)$ 
is 
\textit{Pettis integrable} if 
\begin{itemize}
\item
the map $\sigma(x^{*}, \Gamma^{\alpha}(\cdot))$ is Lebesgue integrable for each $x^{*} \in X^{*}$ and $\alpha \in (0,1]$,
\item
for each $A \in \Sigma$ there exists 
a generalized fuzzy number $M(A) \in \mathcal{F}_{cwk}(X)$ such that  
\begin{equation*}
\begin{split}
\sigma \left ( x^{*}, [M(A)]^{\alpha} \right ) 
= 
\int_{A} \sigma(x^{*}, \Gamma^{\alpha}(t)) d \mu
~\text{ for all }x^{*} \in X^{*}, \alpha \in (0,1].
\end{split}
\end{equation*}
\end{itemize}
We call $M(A)$ the \textit{fuzzy Pettis integral} of $\Gamma$ over $A$ and 
set $(FP)\int_{A} \Gamma(t) d \mu = M(A)$. 
It is clear that the map 
\begin{equation*}
\begin{split}
M : \Sigma \to \mathcal{F}_{cwk}(X), 
\quad
M(A) = (FP)\int_{A} \Gamma(t) d \mu
\end{split}
\end{equation*}
is a fuzzy number multimeasure.
\end{definition}

\section{The main result}

The main result is Theorem \ref{T_Fuzzy_RNT_Dominant}. 
Let us start with two auxiliary lemmas.

\begin{lemma}\label{L_Intersection_LSC}
Let $\mathcal{I}$ be an index set directed under a (reflexive, transitive,
anti-symmetric) relation "$\leq$", let $\{C_{i} \in cwk(X): i \in \mathcal{I}\}$ be a family such that 
$C_{i} \supset C_{i'}$ whenever $i \leq i'$. 
\begin{itemize}
\item[(i)]
If $C = \bigcap_{i \in \mathcal{I}} C_{i}$, then 
\begin{equation}\label{eqL_Intersection_LSC.0}
\begin{split}
\lim_{i \in \mathcal{I}} \sigma(x^{*}, C_{i}) = \sigma(x^{*},C)
~\text{ for every }x^{*} \in X^{*}.
\end{split}
\end{equation}
\item[(ii)]
If \eqref{eqL_Intersection_LSC.0} holds for every $x^{*} \in X^{*}$, 
then $C = \bigcap_{i \in \mathcal{I}} C_{i}$.
\end{itemize}
\end{lemma}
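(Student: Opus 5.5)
The plan is to use weak compactness through the finite intersection property, so that the decreasing net of supports cannot fall strictly below the support of the intersection. Part (ii) then follows from (i) and the Hahn--Banach separation theorem.

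First I check that $C=\bigcap_{i\in\mathcal{I}}C_i$ is a nonempty member of $cwk(X)$. Each $C_i$ is weakly compact, hence weakly closed. Since $\mathcal{I}$ is directed, for finitely many $i_1,\dots,i_n$ there is $k$ with $i_j\le k$ for all $j$, so $C_k\subset C_{i_1}\cap\dots\cap C_{i_n}$. Thus the family $\{C_i\}$ has the finite intersection property inside the weakly compact set $C_{i_0}$, for any fixed $i_0$. Hence $C\neq\emptyset$. As an intersection of convex, weakly closed subsets of $C_{i_0}$, the set $C$ is convex and weakly compact.

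For (i), fix $x^{*}\in X^{*}$. The net $(\sigma(x^{*},C_i))_{i\in\mathcal{I}}$ is nonincreasing and is bounded below by $\sigma(x^{*},C)$, because $C\subset C_i$. It therefore converges to $s=\inf_i\sigma(x^{*},C_i)\ge\sigma(x^{*},C)$. For the reverse inequality, set
\begin{equation*}
D_i=\{x\in C_i:\langle x^{*},x\rangle\ge s\}.
\end{equation*}
Since $x^{*}$ is weakly continuous and $C_i$ is weakly compact, the supremum $\sigma(x^{*},C_i)\ge s$ is attained, so $D_i\neq\emptyset$. Each $D_i$ is weakly closed in $C_{i_0}$ for $i\ge i_0$, and $D_{i'}\subset D_i$ whenever $i\le i'$. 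The directedness argument above again gives the finite intersection property. So there is $x_0\in\bigcap_i D_i\subset C$ with $\langle x^{*},x_0\rangle\ge s$, whence $\sigma(x^{*},C)\ge s$ and \eqref{eqL_Intersection_LSC.0} holds.

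For (ii), I read the hypothesis as saying that $C\in cwk(X)$ is a given set (or at least a nonempty closed convex set) satisfying \eqref{eqL_Intersection_LSC.0} for every $x^{*}\in X^{*}$. Put $D=\bigcap_i C_i$. By (i), $\sigma(x^{*},D)=\lim_i\sigma(x^{*},C_i)=\sigma(x^{*},C)$ for all $x^{*}\in X^{*}$. Both $C$ and $D$ are nonempty, closed and convex in the locally convex space $X$, and such sets are determined by their support functions. Indeed, if $x\in C\setminus D$, the Hahn--Banach separation theorem gives $x^{*}$ with $\langle x^{*},x\rangle>\sigma(x^{*},D)=\sigma(x^{*},C)$, a contradiction; the case $x\in D\setminus C$ is symmetric. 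Hence $C=D$.

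The main obstacle is the reverse inequality $\sigma(x^{*},C)\ge s$ in (i). This is exactly where weak compactness is essential: without it, a decreasing net of closed convex sets can have supports that stay bounded away from the support of the intersection. The sets $D_i$ together with the finite intersection property handle this step. The only other point needing care is the interpretation of $C$ in (ii) as a given closed convex set.
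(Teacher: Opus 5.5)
Your proof is correct and follows essentially the paper's route: $C$ is nonempty and weakly compact by the finite intersection property, the inequality $\lim_i\sigma(x^{*},C_i)\ge\sigma(x^{*},C)$ comes from monotonicity, the reverse inequality uses weak compactness of $C_{i_0}$, and (ii) follows from (i) plus Hahn--Banach separation (the paper also tacitly needs $C$ to be a nonempty closed convex set there, as you note). The only difference is cosmetic: the paper proves the reverse inequality by contradiction, choosing $x_i\in C_i$ with $\langle x^{*},x_i\rangle>r+\varepsilon$ and extracting a weakly convergent subnet, whereas you argue directly via the finite intersection property of the sets $D_i$ and the fact that the supremum is attained on weakly compact sets.
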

\begin{proof}
$(i)$ 
We first fix an index $i_{0} \in \mathcal{I}$. 
Then,
\begin{equation}\label{eqL_Intersection_LSC.1}
\begin{split}
C = \bigcap_{i \in \mathcal{I}} C_{i} = \bigcap_{i \in \mathcal{I}_{0}} C_{i}
\qquad
(\mathcal{I}_{0} = \{i \in \mathcal{I}: i \geq i_{0}\}),
\end{split}
\end{equation}
and by \cite[Theorem 1, p.136]{KELL} follows that $C \neq \emptyset$.

According to \cite[Theorem 7, p.140]{KELL}, $C_{i}$ is $\sigma(X,X^{*})$-closed 
for every $i \in \mathcal{I}_{0}$. 
Hence, the convex set $C$ is $\sigma(X,X^{*})$-closed, 
and since each net with terms in $C \subset C_{i_{0}}$ 
has a subnet which converges to a 
point which belongs to $C$, 
it follows that $C \in cwk(X)$ (cf. \cite[Theorem 2, p.136]{KELL}).

By \eqref{eqL_Intersection_LSC.1}, we have
\begin{equation*}
\begin{split}
\sigma(x^{*}, C) \leq \inf_{i \in \mathcal{I}_{0}} \sigma(x^{*}, C_{i}) 
= \lim_{i } \sigma(x^{*}, C_{i})
~\text{ for every }x^{*} \in X^{*}. 
\end{split}
\end{equation*}
Fix an arbitrary $x^{*} \in X^{*}$ and suppose that there exists $r \in \mathbb{R}$ such that 
\begin{equation*}
\begin{split}
\sigma(x^{*}, C) < r < \inf_{i \in \mathcal{I}_{0}} \sigma(x^{*}, C_{i}). 
\end{split}
\end{equation*}
We can choose $\varepsilon >0$ so that 
\begin{equation*}
\begin{split}
\sigma(x^{*}, C) < r- \varepsilon <r<r +\varepsilon < \inf_{i \in \mathcal{I}_{0}} \sigma(x^{*}, C_{i}). 
\end{split}
\end{equation*}
Then, for each $i \in \mathcal{I}_{0}$ there is $x_{i} \in C_{i}$ such that 
\begin{equation}\label{eqL_Intersection_LSC.3}
\begin{split}
\langle x^{*}, x_{i} \rangle > r+ \varepsilon.
\end{split}
\end{equation}
Since $(x_{i})_{i \in \mathcal{I}_{0}}$ is a  net with terms in weakly compact set $C_{i_{0}} \in cwk(X)$, 
it follows by \cite[Theorem 2, p. 136]{KELL} that 
there is a subnet $(y_{j})_{j \in \mathcal{J}}$ of the net $(x_{i})_{i \in \mathcal{I}_{0}}$ 
converging to a point $x_{0} \in C_{i_{0}}$ 
with respect to the $\sigma(X,X^{*})$-topology. 
From the fact that $(y_{j})_{j \in \mathcal{J}}$ is a subnet of the net $(x_{i})_{i \in \mathcal{I}_{0}}$, 
it follows that there exists a function $u: \mathcal{I}_{0} \to \mathcal{J}$ such that 
\begin{itemize}
\item
$y_{j} = x_{u(j)}$ whenever $j \in \mathcal{J}$, 
\item
for each $i \in \mathcal{I}$ there exists $j_{i} \in \mathcal{J}$ such that 
$j \succeq j_{i} \Rightarrow u(j) \geq i$.
\end{itemize}

Note that for each $i \in \mathcal{I}_{0}$, 
$C_{i} = \overline{C_{i}} = \overline{C_{i}}^{\sigma(X,X^{*})}$ 
(cf. 
\cite[Proposition 8, p.34]{ROB} 
or 
\cite[Corollary 2, p.65]{SCH}), 
and  
\begin{equation*}
\begin{split}
\{x_{i'}: i' \geq i\} \subset C_{i} 
\Rightarrow 
\{y_{j} : j \succeq j_{i}\} \subset C_{i},
\end{split}
\end{equation*}
and since the net $(y_{j})_{j \in \mathcal{J}}$ converges weakly to $x_{0}$, 
it follows that $x_{0} \in C = \bigcap_{i \in \mathcal{I}_{0}} C_{i}$, 
whence
\begin{equation*}
\begin{split}
\langle x^{*}, x_{0} \rangle \leq \sigma(x^{*}, C) < r - \varepsilon.
\end{split}
\end{equation*}
On the other hand, 
\begin{equation*}
\begin{split}
\lim_{j } \langle x^{*}, y_{j} \rangle = \langle x^{*}, x_{0} \rangle \geq r +  \varepsilon
\quad
(\text{see \eqref{eqL_Intersection_LSC.3}}).
\end{split}
\end{equation*}
This contradiction shows that \eqref{eqL_Intersection_LSC.0} holds.

$(ii)$ 
Assume that \eqref{eqL_Intersection_LSC.0} holds and let $D = \bigcap_{i \in \mathcal{I}} C_{i}$. 
Then, we obtain by $(i)$ that 
\begin{equation*}
\begin{split}
\sigma(x^{*}, D) = \lim_{i \in \mathcal{I}} \sigma(x^{*}, C_{i}) = \sigma(x^{*}, C)
~\text{ for every }x^{*} \in X^{*}. 
\end{split}
\end{equation*}
Hence, 
by the Hahn-Banach separation
theorem (\cite[Theorem V.2.10]{DUN}) follows that $D=C$, 
and this completes the proof.
\end{proof}

\begin{lemma}\label{L_Radon_Nikodym_Dominant}
Let $M: \Sigma \to cwk(X)$ be a  multimeasure. 
Assume that there exists $Q \in cwk(X)$ such that 
$M(A) \subset \mu(A) Q$ for all $A \in \Sigma$. 
Then,
\begin{itemize}
\item[(i)]
$L_{M}(t) \subset Q$ and $L_{M}(t) \neq \emptyset$ for all $t \in \Omega$,
\item[(ii)]
the set valued function  
$\Gamma: \Omega \to cwk(X)$ defined by $\Gamma(t) = L_{M}(t)$ for all $t \in \Omega$ 
is Pettis integrable with 
$M(A) = (P)\int_{A} \Gamma(t) d \mu$ for every $A \in \Sigma$.
\end{itemize}
\end{lemma}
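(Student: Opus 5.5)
The plan is to use the domination to trap all the relevant sets inside $Q$, and then to compute the support function of $L_{M}(t)$ through Lemma \ref{L_Intersection_LSC}. After that, Lemma \ref{L_Kupka} identifies this support function almost everywhere with a scalar Radon--Nikod\'ym derivative.

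\textbf{Step 1 (part (i)).} Fix $t\in\Omega$. For $B\in\Sigma(t)$ we have $\mu(B)>0$, and the hypothesis gives $\Delta M(t,B)=M(B)/\mu(B)\subset Q$. Hence $A_{M}(t,A)\subset Q$ for every $A \in \Sigma(t)$. Since $Q$ is convex, $L_{M}(t,A)=\mathrm{co}(A_{M}(t,A))\subset Q$. Since $Q$ is weakly compact, it is weakly closed and therefore closed, so $\overline{L_{M}(t,A)}\subset Q$.

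The closure of a convex set is convex and closed, hence weakly closed by Mazur. A weakly closed subset of a weakly compact set is weakly compact, so $C_{A}:=\overline{L_{M}(t,A)}\in cwk(X)$. Each $C_A$ is nonempty because $M(A)\neq\emptyset$.

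If $A\preceq_t A'$, i.e. $A'\subset A$, then $\Sigma(t,A')\subset\Sigma(t,A)$, so $C_{A'}\subset C_A$. Thus $\{C_A : A\in\Sigma(t)\}$ is a decreasing family over the directed set $(\Sigma(t),\preceq_t)$. Lemma \ref{L_Intersection_LSC}(i) (its proof) then gives $L_{M}(t)=\bigcap_A C_A\in cwk(X)$, nonempty and contained in $Q$. It also gives
\begin{equation*}
\sigma(x^{*},L_{M}(t))=\lim_{A\in\Sigma(t)}\sigma(x^{*},C_A)=\lim_{A\in\Sigma(t)}\ \sup_{B\in\Sigma(t,A)}\frac{\sigma(x^{*},M(B))}{\mu(B)} .
\end{equation*}
The last equality uses two facts: the support function is unchanged by taking closures and convex hulls, and the support of a union is the supremum of the supports.

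\textbf{Step 2 (scalar densities).} Fix $x^{*}\in X^{*}$ and put $\nu(A)=\sigma(x^{*},M(A))$. This is a countably additive real measure by Definition \ref{D_Multimeasures}. The inclusion $M(A)\subset\mu(A)Q$ gives
\begin{equation*}
-\mu(A)\,\sigma(-x^{*},Q)\le\nu(A)\le\mu(A)\,\sigma(x^{*},Q),
\end{equation*}
so $\nu\ll\mu$. The classical Radon--Nikod\'ym theorem then gives an integrable (indeed essentially bounded) $f_{x^{*}}$ with $\nu(A)=\int_A f_{x^{*}}\,d\mu$.

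By Lemma \ref{L_Kupka}, there is a $\mu$-null set $Z_{x^{*}}$ such that for $t\notin Z_{x^{*}}$ the net $\Delta\nu(t,A)$ converges to $f_{x^{*}}(t)$. This is the key step, and the place where care is needed. Convergence means that for each $\varepsilon>0$ there is $A_0\in\Sigma(t)$ with $|\Delta\nu(t,B)-f_{x^{*}}(t)|<\varepsilon$ for all $B\in\Sigma(t,A_0)$. Hence for every $A\in\Sigma(t,A_0)$ the supremum $\sup_{B\in\Sigma(t,A)}\Delta\nu(t,B)$ lies within $\varepsilon$ of $f_{x^{*}}(t)$. Consequently the limit in Step 1 equals $f_{x^{*}}(t)$, that is, $\sigma(x^{*},\Gamma(t))=f_{x^{*}}(t)$ for $t\notin Z_{x^{*}}$.

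\textbf{Step 3 (part (ii)).} The null set $Z_{x^{*}}$ depends on $x^{*}$, but this is harmless because Definition \ref{D_Pettis_Multi} is checked one functional at a time. Since $\mu$ is complete, $\sigma(x^{*},\Gamma(\cdot))$ agrees a.e. with the integrable function $f_{x^{*}}$, so it is measurable and Lebesgue integrable. Moreover
\begin{equation*}
\int_A\sigma(x^{*},\Gamma(t))\,d\mu=\nu(A)=\sigma(x^{*},M(A))
\end{equation*}
for all $A\in\Sigma$. As $M(A)\in cwk(X)$ by hypothesis and $\Gamma(t)\in cwk(X)$ by Step 1, $\Gamma$ is Pettis integrable with $(P)\int_A\Gamma\,d\mu=M(A)$.

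I expect the main obstacle to be Step 2. There one must pass from convergence of the scalar net $\Delta\nu(t,\cdot)$ to convergence of its "tail suprema", and match that with the support formula supplied by Lemma \ref{L_Intersection_LSC}. The rest is bookkeeping with the domination by $Q$.
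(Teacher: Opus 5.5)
Your proposal is correct and follows essentially the same route as the paper. Domination places every $\overline{L_{M}(t,A)}$ inside $Q$, so Lemma \ref{L_Intersection_LSC} gives $L_{M}(t)\in cwk(X)$ together with the tail-supremum formula for $\sigma(x^{*},L_{M}(t))$. The scalar Radon--Nikod\'ym derivative and Lemma \ref{L_Kupka} then identify $\sigma(x^{*},L_{M}(t))$ a.e.\ with that derivative. You also supply details the paper leaves implicit: the explicit verification of part (i), the two-sided bound giving $\sigma(x^{*},M(\cdot))\ll\mu$, and the use of completeness of $\mu$ for measurability.
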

\begin{proof}
Since $\overline{L_{M}(t, A)} = \overline{L_{M}(t, A)}^{\sigma(X,X^{*})} \in cwk(X)$ 
for all $A \in \Sigma(t)$ ($t \in \Omega$), 
it follows that $L_{M}(t) \in cwk(X)$ for all $t \in \Omega$.
Hence, we obtain by Lemma \ref{L_Intersection_LSC} that
\begin{equation*}
\begin{split}
\sigma(x^{*}, L_{M}(t)) = \lim_{A \in \Sigma(t)} \sigma(x^{*}, L_{M}(t,A)) 
\qquad
(t \in \Omega, x^{*} \in X^{*}, A \in \Sigma), 
\end{split}
\end{equation*}
and since 
$\sigma(x^{*}, L_{M}(t,A)) = \sigma(x^{*}, A_{M}(t,A))$ 
for every $t \in \Omega, x^{*} \in X^{*}$ and $A \in \Sigma$, 
it follows that
\begin{equation}\label{eqL_Radon_Nikodym_Dominant.1}
\begin{split}
\sigma(x^{*}, L_{M}(t)) = \lim_{A \in \Sigma(t)} \sigma(x^{*}, A_{M}(t,A))
\qquad
(t \in \Omega, x^{*} \in X^{*}, A \in \Sigma).
\end{split}
\end{equation}

Fix an arbitrary $x^{*} \in X^{*}$. 
Then, the map $\sigma(x^{*}, M(\cdot))$ is a $\mu$-continuous real valued measure. 
Hence, there exists a Lebesgue integrable function $h: \Omega \to \mathbb{R}$ which is 
a Radon-Nikdym derivative of the measure $\sigma(x^{*}, M(\cdot))$, i.e., 
\begin{equation}\label{eqL_Radon_Nikodym_Dominant.2} 
\begin{split}
\sigma(x^{*}, M(A)) = \int_{A} h(t) d \mu 
~\text{ for every }A \in \Sigma.
\end{split}
\end{equation}
By Lemma \ref{L_Kupka} 
there exists $Z \in \Sigma$ with $\mu(Z)=0$ such that
\begin{equation}\label{eqL_Radon_Nikodym_Dominant.3}
\begin{split}
\lim_{A \in \Sigma(t)} \sigma(x^{*}, \Delta M(t, A)) =  h(t)
~\text{ for all }t \in \Omega \setminus Z.
\end{split}
\end{equation} 

Since
\begin{equation*}
\begin{split}
x^{*}(A_{M}(t,A)) 
= \bigcup_{B \in \Sigma(t, A)} x^{*}(\Delta M(t, B))
\qquad
(t \in \Omega,  A \in \Sigma).
\end{split}
\end{equation*}
we obtain 
\begin{equation*}
\begin{split}
\inf_{A \in \Sigma(t)} \sigma(x^{*}, A_{M}(t,A)) 
= \inf_{A \in \Sigma(t)} \left ( \sup_{B \in \Sigma(t, A)} \sigma(x^{*}, \Delta M(t, B)) \right )
\qquad
(t \in \Omega),
\end{split}
\end{equation*}
and since
\begin{equation*}
\begin{split}
\inf_{A \in \Sigma(t)} \sigma(x^{*}, A_{M}(t,A))
=
\lim_{A \in \Sigma(t)} \sigma(x^{*}, A_{M}(t,A))
\qquad
(t \in \Omega),
\end{split}
\end{equation*}
and
\begin{equation*}
\begin{split}
\inf_{A \in \Sigma(t)} \left ( \sup_{B \in \Sigma(t, A)} \sigma(x^{*}, \Delta M(t, B)) \right )
= \lim_{A \in \Sigma(t)} \sigma(x^{*}, \Delta M(t, A))
\qquad
(t \in \Omega \setminus Z),
\end{split}
\end{equation*}
it follows by \eqref{eqL_Radon_Nikodym_Dominant.1} and \eqref{eqL_Radon_Nikodym_Dominant.3} that 
\begin{equation*}
\begin{split}
\sigma(x^{*}, L_{M}(t)) = \lim_{A \in \Sigma(t)} \sigma(x^{*}, A_{M}(t,A)) 
= 
h(t)
~\text{ for all }t \in \Omega \setminus Z.
\end{split}
\end{equation*}
Thus, $\sigma(x^{*}, L_{M}(t)) = h(t)$ at almost all $t \in \Omega$. 
Hence, the map  $\sigma(x^{*}, L_{M}(\cdot))$ is Lebesgue integrable 
and by \eqref{eqL_Radon_Nikodym_Dominant.2} follows that
\begin{equation*}
\begin{split}
\sigma(x^{*}, M(A)) = \int_{A} h(t) d \mu = \int_{A} \sigma(x^{*}, L_{M}(t)) d \mu
~\text{ for every }A \in \Sigma.
\end{split}
\end{equation*}
Since $x^{*}$ was arbitrary, the last result means that 
the multifunction $\Gamma(\cdot) = L_{M}(\cdot)$ is Pettis integrable 
with $M(A) = (P)\int_{A} \Gamma(t) d \mu$ for all $A \in \Sigma$, 
and this completes the proof.
\end{proof}

We are now ready to present the main result.

\begin{theorem}\label{T_Fuzzy_RNT_Dominant} 
Let $M: \Sigma \to \mathcal{F}_{cwk}(X)$ be 
a fuzzy number multimeasure. 
Assume that there is a generalized fuzzy number $Q \in \mathcal{F}_{cwk}(X)$ such that 
\begin{equation*}
\begin{split}
M^{\alpha}(A) \subset \mu(A) [Q]^{\alpha} 
~\text{ for all }A \in \Sigma,~\alpha \in (0,1].
\end{split}
\end{equation*}
Then, there exists a fuzzy map $\Gamma : \Omega \to \mathcal{F}_{cwk}(X)$ 
Pettis integrable  with 
$M(A) = (FP)\int_{A} \Gamma(t) d\mu$ for all $A \in \Sigma$.
\end{theorem}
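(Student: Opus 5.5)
We apply Lemma \ref{L_Radon_Nikodym_Dominant} level by level and then glue the levels into a fuzzy map. For each $\alpha \in (0,1]$, the map $M^{\alpha}:\Sigma \to cwk(X)$ is a multimeasure by Definition \ref{D_Fuzzy_Number_Measure}. By hypothesis $M^{\alpha}(A) \subset \mu(A)[Q]^{\alpha}$ with $[Q]^{\alpha} \in cwk(X)$. Lemma \ref{L_Radon_Nikodym_Dominant} therefore gives $\Gamma_{\alpha}(t) := L_{M^{\alpha}}(t) \in cwk(X)$, nonempty and contained in $[Q]^{\alpha}$ for every $t$. Moreover $\Gamma_{\alpha}$ is Pettis integrable with $M^{\alpha}(A) = (P)\int_A \Gamma_{\alpha}\,d\mu$.

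The next step is monotonicity in $\alpha$, pointwise for every $t$ and not merely a.e. If $0<\alpha\le\beta\le 1$, then $M^{\beta}(B)\subset M^{\alpha}(B)$ for all $B$ by Lemma \ref{L_REP_Fuzzy_Numbers}(ii). Hence $\Delta M^{\beta}(t,B)\subset \Delta M^{\alpha}(t,B)$, so $A_{M^{\beta}}(t,A)\subset A_{M^{\alpha}}(t,A)$. Taking convex hulls, closures and the intersection over $\Sigma(t)$ yields $\Gamma_{\beta}(t)\subset\Gamma_{\alpha}(t)$. This is the advantage of the explicit density construction: it is defined everywhere and is monotone everywhere. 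Thus, for each fixed $t$, the family $\{\Gamma_{\alpha}(t)\}$ satisfies (i) and (ii) of Lemma \ref{L_Fuzzy_Numbers_CWK}. That lemma gives a unique $\Gamma(t)\in\mathcal{F}_{cwk}(X)$ with
\begin{equation*}
[\Gamma(t)]^{\alpha} = \bigcap_{n=1}^{+\infty} \Gamma_{q_n^{\alpha}}(t),
\end{equation*}
where $(q_n^{\alpha})$ determines $\alpha$. I take this to be a nondecreasing sequence in $(0,1]$ converging to $\alpha$, with $q_n^{\alpha}=\alpha$ allowed when $\alpha$ is rational, as in \cite[Definition 2.1]{KAL_Fuzzy_SystemI}.

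It remains to check Pettis integrability of $\Gamma$ and identify its integral with $M$. Fix $\alpha$ and $x^{*}$. The sets $\Gamma_{q_n^{\alpha}}(t)$ form a decreasing sequence in $cwk(X)$, so Lemma \ref{L_Intersection_LSC}(i) gives $\sigma(x^{*},[\Gamma(t)]^{\alpha}) = \lim_n \sigma(x^{*},\Gamma_{q_n^{\alpha}}(t))$ for every $t$. Since $\Gamma_{q_n^{\alpha}}(t)\subset[Q]^{q_n^{\alpha}}\subset[Q]^{q_1^{\alpha}}$, these functions are bounded in absolute value by the constant $|\sigma(x^{*},[Q]^{q_1^{\alpha}})| + |\sigma(-x^{*},[Q]^{q_1^{\alpha}})|$. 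Each is integrable, and the sequence is monotone, so dominated (or monotone) convergence applies. Hence $\sigma(x^{*},[\Gamma(\cdot)]^{\alpha})$ is Lebesgue integrable, with
\begin{equation*}
\int_A \sigma(x^{*},[\Gamma(t)]^{\alpha})\,d\mu = \lim_n \int_A \sigma(x^{*},\Gamma_{q_n^{\alpha}}(t))\,d\mu = \lim_n \sigma(x^{*},M^{q_n^{\alpha}}(A)).
\end{equation*}
Finally, Lemma \ref{L_REP_Fuzzy_Numbers}(iii) applied to $M(A)$ gives $M^{\alpha}(A)=\bigcap_n M^{q_n^{\alpha}}(A)$. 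Lemma \ref{L_Intersection_LSC}(i), applied to this decreasing sequence, turns the last limit into $\sigma(x^{*},M^{\alpha}(A))$. Thus $\sigma(x^{*},[M(A)]^{\alpha}) = \int_A\sigma(x^{*},[\Gamma(t)]^{\alpha})\,d\mu$ for all $x^{*}$ and $\alpha$, which is exactly $M(A)=(FP)\int_A\Gamma\,d\mu$.

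The main obstacle is that $[\Gamma(t)]^{\alpha}$ is not $\Gamma_{\alpha}(t)$ itself but the intersection from Lemma \ref{L_Fuzzy_Numbers_CWK}. The levels $\Gamma_{\alpha}(t)$ need not satisfy the left-continuity condition (iii) pointwise, because the exceptional null sets from Lemma \ref{L_Kupka} depend on $\alpha$ and $x^{*}$ and are uncountable in number. The argument above avoids any a.e. comparison of $[\Gamma(t)]^{\alpha}$ with $\Gamma_{\alpha}(t)$: it passes to the limit in $n$ only after integrating, using the uniform bound coming from $Q$. That bound is where the domination hypothesis enters a second time.
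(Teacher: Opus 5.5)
Your proof is correct and follows the paper's argument step by step. You apply Lemma \ref{L_Radon_Nikodym_Dominant} level by level, use the everywhere-monotonicity of $L_{M^{\alpha}}(t)$ in $\alpha$, glue the levels via Lemma \ref{L_Fuzzy_Numbers_CWK}, and pass to the limit with Lemma \ref{L_Intersection_LSC}(i) and dominated convergence, while making explicit what the paper leaves implicit: the inclusion argument for monotonicity, the constant dominating function from $[Q]^{q_1^{\alpha}}$, and the identity $\lim_n\sigma(x^{*},M^{q_n^{\alpha}}(A))=\sigma(x^{*},M^{\alpha}(A))$ via Lemma \ref{L_REP_Fuzzy_Numbers}(iii).
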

\begin{proof}
According to Lemma \ref{L_Radon_Nikodym_Dominant}, 
for each $\alpha \in (0,1]$ the map $\Gamma_{\alpha}(\cdot) = L_{M^{\alpha}}(\cdot)$ 
is Pettis integrable with 
\begin{equation}\label{eqT_Fuzzy_RNT_Dominant.1}
\begin{split}
\sigma(x^{*}, [M(A)]^{\alpha}) 
= 
\int_{A} \sigma(x^{*}, \Gamma_{\alpha}(t)) d \mu
\qquad
(x^{*} \in X^{*}, A \in \Sigma, \alpha \in (0,1]).  
\end{split}
\end{equation}

We are going to prove that for each $t \in \Omega$ the family 
$\{\Gamma_{\alpha}(t): \alpha \in (0,1]\}$ 
satisfies $(i)$ and $(ii)$ in  Lemma \ref{L_Fuzzy_Numbers_CWK}
\begin{itemize}
\item
Since $\Gamma_{\alpha}(t) \in cwk(X)$ for each $t \in \Omega$ and $\alpha \in (0,1]$, 
it follows that $(i)$ in Lemma \ref{L_Fuzzy_Numbers_CWK} is satisfied.
\item
Let $\alpha, \beta \in (0,1]$ be such that 
$0 < \alpha \leq \beta \leq 1$ and let $t \in \Omega$. 
Then, 
$\Gamma_{\beta}(t) = L_{M^{\beta}}(t) \subset L_{M^{\alpha}}(t) = \Gamma_{\alpha}(t)$.   
This means that 
the family 
$\{\Gamma_{\alpha}(t): \alpha \in (0,1]\}$ 
satisfies also $(ii)$ in Lemma \ref{L_Fuzzy_Numbers_CWK} whenever $t \in \Omega$. 
\end{itemize}

Since $\{\Gamma_{\alpha}(t): \alpha \in (0,1]\}$  
satisfies $(i)$ and $(ii)$ in Lemma \ref{L_Fuzzy_Numbers_CWK}
for all $t \in \Omega$, 
it follows that there exists a unique 
$\Gamma(t) \in \mathcal{F}_{cwk}(X)$ such that 
\begin{equation*}
\begin{split} 
[\Gamma(t)]^{\alpha}
=
\bigcap_{n=1}^{+\infty} \Gamma_{q_{n}^{\alpha}}(t)
~\text{ for every }\alpha \in (0,1], t \in \Omega,
\end{split}
\end{equation*}
and hence by $(i)$ in Lemma \ref{L_Intersection_LSC} we obtain
\begin{equation*}
\begin{split}
\lim_{n \to \infty} \sigma(x^{*}, \Gamma_{q_{n}^{\alpha}}(t)) = \sigma(x^{*}, [\Gamma(t)]^{\alpha})
\qquad
(\alpha \in (0,1], x^{*} \in X^{*}, t \in \Omega).
\end{split}
\end{equation*}
Consequently, by Dominated Convergence Theorem the map 
$\sigma(x^{*}, [\Gamma(\cdot)]^{\alpha})$ is Lebesgue integrable 
and
\begin{equation*} 
\begin{split}
\int_{A} \sigma(x^{*}, [\Gamma(t)]^{\alpha}) d \mu 
=& 
\lim_{n \to \infty} \int_{A}  \sigma(x^{*}, \Gamma_{q_{n}^{\alpha}}(t)) d \mu 
=
\lim_{n \to \infty} \sigma(x^{*}, M^{q_{n}^{\alpha}}(A))
\\
=&
\sigma(x^{*}, M^{\alpha}(A))
\qquad
(x^{*} \in X^{*}, A \in \Sigma, \alpha \in (0,1]).  
\end{split}
\end{equation*}
This means that the fuzzy map $\Gamma$ is Pettis integrable 
with $M(A) = (FP)\int_{A} \Gamma(t) d \mu$ for all $A \in \Sigma$. 
Thus, $\Gamma$ is a fuzzy Radon-Nikodym derivative of the fuzzy number multimeasure $M$ and the proof is complete.
\end{proof}


\begin{thebibliography}{plain}








\bibitem{BAN} J.~Ban, 
\textit{Radon-Nikod\'{y}m theorem and conditional expectation 
of fuzzy valued measures and variables}, 
Fuzzy Sets and Systems \textbf{34} (1990), 383-392.







\bibitem{CAND_RNT_GOULD}
D. Candeloro, A. Croitoru, A. Gavrilut, A.R. Sambucini, A multi- valued version 
of the Radon-Nikod\'{y}m theorem, via the single-valued Gould integral, 
Australian Journal of Mathematical Analysis and Applications 
\textbf{15/2}, Art. 9, (2018), 1-16.








\bibitem{CAST} C. ~Castaing, ~M. ~Valadier, 
\textit{Convex Analysis and Measurable Multifunctions} ,
Lect. Notes Math., \textbf{580}, Springer-Verlag, Berlin (1977).







\bibitem{CASC_SelectorsA} B.~Cascales, V.~Kadets, and J.~Rodr\'{i}guez,  
\textit{Measurable selectors and set-valued Pettis integral in non-separable Banach spaces}, 
J. Funct. Anal. \textbf{256/3} (2009), 673-699.






\bibitem{CASC_SelectorsB} B.~Cascales, V.~Kadets, and J.~Rodr\'{i}guez, 
\textit{Measurability and selections of multifunctions in Banach spaces}, 
J. Convex Analysis \textbf{17} (2010), 229-240. 






\bibitem{CASC_RNT} B.~Cascales, V.~Kadets, J.~Rodriguez,
\textit{Radon-Nikod\'{y}m theorems for multimeasures in non-separable spaces}, 
Zh. Mat. Fiz. Anal. Geom. \textbf{9/1} (2013), 7-24.






\bibitem{DIP_RNT} L.~Di Piazza, G.~Porcello, 
\textit{Radon-Nikod\'{y}m theorems for
finitely additive multimeasures}, 
Journal of Analysis and its Applications
\textbf{34} (2015), 373-389. 






\bibitem{DIP_Bartle} L. Di Piazza, K. Musial, A.R. Sambucini, 
\textit{Representations of multimeasures via multivalued Bartle-Dunford-Schwartz integral}, 
Journal of Convex Analysis \textbf{(29/4)} (2022), 1119-1148. 






\bibitem{DUN} N. Dunford and J. T. Schwartz, 
\textit{Linear operators. Part I: General theory}, 
Interscience, New York, (1958).






\bibitem{EL} K.~El Amri and C.~Hess, 
\textit{On the Pettis integral of closed valued multifunctions}, 
Set-Valued Anal. \textbf{8(4)} (2000), 329-360.






\bibitem{FOLL} G.~B.~Folland, 
\textit{Real analysis: Modern Techniques and their Applications}, 
A Wiley-Interscience Publication (1999). 









\bibitem{KAL_ZAA_2024} S.~B.~Kaliaj, 
\textit{Radon-Nikod\'{y}m theorems in non-separable Banach spaces}, 
Z. Anal. Anwend. \textbf{43} (2024), 125-142.






\bibitem{KAL_JMAAI} S.~B.~Kaliaj, 
\textit{Extension theorems of additive interval multifunctions with applications}, 
J. Math. Anal. Appl. \textbf{540} (2024).






\bibitem{KAL_INDIA} S.~B.~Kaliaj,
\textit{Radon-Nikod\'{y}m theorems for additive interval multimeasures in Banach spaces},  
Indian J. Pure Appl. Math. (2024).  







\bibitem{KAL_Fuzzy_SystemI} S.~B.~Kaliaj, 
\textit{Radon-Nikod\'{y}m theorems for generalized fuzzy number measures}, 
Fuzzy Sets and Systems \textbf{512} (2025).





\bibitem{KAL_Fuzzy_SystemII} S.~B.~Kaliaj, 
\textit{Radon-Nikod\'{y}m theorems for fuzzy number multimeasures}, 
Fuzzy Sets and Systems \textbf{524} (2026). 



\bibitem{KAL_FNM_Frechet} S.~B.~Kaliaj, 
\textit{A Radon-Nikod\'{y}m theorem for multimeasures in Fr\'{e}chet spaces with application}, 
J. Math. Anal. Appl. \textbf{557/2} (2026).





\bibitem{KELL} J.~L.~Kelly,
\textit{General Topology}, 
Springer New York Van Nostrand, (1955).







\bibitem{MUS_Pettis_Integral_Functions} K.~Musial,
\textit{Topics in the theory of Pettis integration},
Rend. Istit. Mat. Univ. Trieste \textbf{23} (1991), 177-262. 






\bibitem{MUS_Elsevier_Functions} K.~Musial, 
\textit{Pettis integral, in: Handbook of Measure Theory I}, 
Elsevier, Amsterdam (2002), 532-586.






\bibitem{MUS_Pettis_MultiF} K.~Musial,
\textit{Pettis integrability of multifunctions with values in arbitrary Banach spaces}, 
J. Convex Analysis \textbf{18} (2011), 769-810.






\bibitem{MUS_Approximation} K.~Musial,
\textit{Approximation of Pettis integrable multifunctions with values in arbitrary Banach spaces}, 
J. Convex Analysis \textbf{20(3)} (2013), 833-870.





\bibitem{MUS_Conjugate_Banach_Spaces} K.~Musial,
\textit{Multimeasures with values in conjugate Banach spaces and the weak
Radon-Nikod\'{y}m property}, 
J. Convex Analysis \textbf{28/3} (2021), 879-902.






\bibitem{MUS_RNT_Rich_Multimeasures_I} K.~Musial, 
\textit{Lifting Approach to Integral Representations of Rich Multimeasures with Values in Banach Spaces $I$},
J.~Convex Anal., \textbf{(31/1)} (2024), 179-193. 






\bibitem{MUS_RNT_Rich_Multimeasures_II} K.~Musial, 
\textit{Lifting Approach to Integral Representations of Rich Multimeasures with Values in Banach Spaces $II$},
J.~Convex Anal., \textbf{(31/3)} (2024), 867-888. 






\bibitem{OUY} Yao Ouyang, Jun Li, 
\textit{A Radon-Nikod\'{y}m theorem for monotone measures}, 
Fuzzy Sets and Systems \textbf{487} (2024). 




\bibitem{ROB} A.~P.~Robertson and W.~Robertson, 
\textit{Topological Vector Spaces}, 
Cambridge University  Press (1964). 






\bibitem{SCH} H.~H.~Schaefer, W.~P.~Wolff, 
\textit{Topological Vector Spaces},
Originally published by Springer-Verlag New York, (1999).







\bibitem{STOJ_RNT} M.~Stojakovi\'{c}, 
\textit{Fuzzy valued measure}, 
Fuzzy Sets and Systems \textbf{65/1} (1994), 95-104.








\bibitem{TAL}  M.~Talagrand, 
\textit{Pettis Integral and Measure Theory}, 
Mem. Amer. Math. Soc. \textbf{51} (1984).








\bibitem{ION} A.~Ionescu Tulcea and C.~Ionescu Tulcea, 
\textit{Topics in the theory of lifting},
Ergebnisse der Mathematik und ihrer Grenzgebiete, 
Band \textbf{48}, Springer-Verlag New York Inc., New York, (1969).







\bibitem{WU_RNT} Jianrong Wu, Xiaoping Xue, Congxin Wu, 
\textit{Radon-Nikod\'{y}m theorem and Vitali-Hahn-Saks theorem on fuzzy number measures in Banach spaces}, 
Fuzzy Sets and Systems \textbf{117/3} (2001), 339-346. 

 




\bibitem{XIA_Measure} Xue Xiaoping,  Ha Minghu,  Wu Congxin, 
\textit{On the extension of the fuzzy number measures in Banach spaces: 
Part I. Representation of the fuzzy number measures},
Fuzzy sets and system \textbf{78} (1996), 347-354.








\bibitem{ZAD} L.~A.~Zadeh, 
\textit{Fuzzy sets}, 
Inform. and Control \textbf{8} (1965), 338-353.







\bibitem{ZHOU} Cai-Li Zhou and Fu-Gui Shi,
\textit{Lebesgue Decomposition Theorem and Weak Radon-Nikod\'{y}m
Theorem for Generalized Fuzzy Number Measures}, 
Journal of Function Spaces Volume 2015, Article ID 576134, 8 pages,
http://dx.doi.org/10.1155/2015/576134.

\end{thebibliography}
\end{document}